\theoremstyle{plain}
\newtheorem{theorem}{Theorem}[section]
\newtheorem{lemma}[theorem]{Lemma}
\newtheorem{step}{Step}
\theoremstyle{definition}
\newtheorem{remark}[theorem]{Remark}
\numberwithin{equation}{section}
\numberwithin{equation}{section}
\begin{document}
\title[Non-linear $\ast$- Jordan derivations on von Neumann algebras]%
{Non-linear $\ast$-Jordan derivations on von Neumann algebras}
\author[Ali Taghavi, Hamid Rohi and Vahid Darvish]%
{Ali Taghavi$^{\ast}$, Hamid Rohi and Vahid Darvish}

\newcommand{\acr}{\newline\indent}
\address{\llap{*\,}Department of Mathematics,\\ Faculty of Mathematical
Sciences,\\ University of Mazandaran,\\ P. O. Box 47416-1468,\\
Babolsar, Iran.} \email{taghavi@umz.ac.ir, h.rohi@stu.umz.ac.ir,
v.darvish@stu.umz.ac.ir}

 \subjclass[2010]{46J10 , 47B48.}
\keywords{$\ast$-Jordan derivation, von Neumann algebra, Additive}

\begin{abstract}
Let $\mathcal{A}$ be a factor von Neumann algebra and $\phi$ be the
$\ast$-Jordan derivation on $A$, that is, for every $A,B \in
\mathcal{A}$,
 $\phi(A\diamond_{1} B) = \phi(A)\diamond_{1} B + A\diamond_{1}\phi( B)$ where $A\diamond_{1} B = AB + BA^{\ast}$, then $\phi$ is additive $\ast$-derivation.
\end{abstract}

\maketitle

\section{Introduction}
Let $\mathcal{R}$ and $\mathcal{R^{'}}$ be rings. We say the map
$\Phi: \mathcal{R}\to \mathcal{R^{'}}$ preserves product or is
multiplicative if $\Phi(AB)=\Phi(A)\Phi(B)$ for all $A, B\in
\mathcal{R}$. The question of when a product preserving or
multiplicative map is additive was discussed by several authors, see
\cite{mar} and references therein. Motivated by this, many authors
pay more attention to the map on rings (and algebras) preserving
Lie product $[A,B]=AB-BA$ or Jordan product $A\circ B=AB+BA$
(for example, see \cite{bai2,bai3,hak,ji,lu,lu2,mol2,qi}). These results
show that, in some sense, Jordan product or Lie product
structure is enough to determine the ring or algebraic structure.
Historically, many mathematicians devoted themselves to the study of
additive or linear Jordan or Lie product preservers between rings or
operator algebras. Such maps are always called Jordan homomorphism
or Lie homomorphism. Here we only list several results
\cite{her,jac,kad,mar,mir,mir2}.

Let $\mathcal{R}$ be a $*$-ring. For $A,B\in\mathcal{R}$, denoted by
$A\bullet B=AB+BA^{*}$ and $[A,B]_{*}=AB-BA^{*}$, which are $\ast$-Jordan product and $\ast$-Lie product, respectively. These products are found playing a
more and more important role in some research topics, and its study
has recently attracted many author's attention (for example, see
\cite{mol,taghavi,cui,li2}).

Let define $\xi$-Jordan $\ast$-product by $A\diamondsuit_{\xi}B =
AB + \xi BA^{\ast}$. We say the map $\phi$ with property of
$\phi(A\diamondsuit_{\xi} B) = \phi(A)\diamondsuit_{\xi}B +  A
\diamondsuit_{\xi}\phi(B)$ is a $\xi$-Jordan $\ast$-derivation
map. It is clear that for $\xi = - 1$ and $\xi = 1,$ the $\xi
$-Jordan $\ast$-derivation map is a $\ast$-Lie derivation and
$\ast$-Jordan derivation, respectively \cite{bai}. We should mention here whenever  we say $\phi$ preserves derivation, it means $\phi(AB)=\phi(A)B+A\phi(B)$.\\ 
Recently, Yu and Zhang in \cite{yu}
 proved that every non-linear $\ast$-Lie derivation  from a factor von Neumann algebra into itself is an additive $\ast$-derivation. Also,
 Li, Lu and Fang
in \cite{li} have investigated a non-linear $\xi$-Jordan
$\ast$-derivation. They showed that if
$\mathcal{A}\subseteq\mathcal{B(H)}$ is a von Neumann algebra
without central abelian projections and $\xi$ is a non-zero scaler,
then $\phi:\mathcal{A} \longrightarrow \mathcal{B(H)}$ is a
non-linear $\xi$-Jordan $\ast$-derivation if and only if $\phi$ is
an additive $\ast$-derivation.

Let $\mathcal{H}$ be a complex Hilbert space and  $\mathcal{B(H)}$ be  all bounded linear operators on  $\mathcal{H}$. In this
paper we show that $\ast$-Jordan derivation map on every
factor von Neumann algebra $\mathcal{A}
 \subseteq \mathcal{B(H)}$ is
additive $\ast$-derivation.\\
 Note that a subalgebra $\mathcal{A}$ of
$\mathcal{B(H)}$ is called a von Neumann algebra when it is closed in
the weak topology of operators. A von Neumann algebra $\mathcal{A}$
is called factor when its center is trivial. It is clear that if
$\mathcal{A}$ is a factor von Neumann algebra, then $\mathcal{A}$ is
prime, that is, for $A,B \in \mathcal{A}$ if $A\mathcal{A}B =
\lbrace0\rbrace,$ then $A = 0$ or $B = 0.$ We denote real and
imaginary part of an operator $A$ by $\Re(A)$ and $\Im(A)$,
respectively i.e., $\Re(A)=\frac{A+A^{*}}{2}$ and $\Im(A)=\frac{A-A^{*}}{2i}$.
\section{The statement of the main theorem}
\noindent  The statement of our main theorem is the following.
\\
\textbf{Main Theorem.} Let $\mathcal{A}$ be a factor von Neumann algebra acting on
complex Hilbert space $\mathcal{H}$ and $\phi:\mathcal{A}
\longrightarrow \mathcal{A}$ be a $\ast$-Jordan derivation on
$\mathcal{A}$, that is, for every $A,B \in \mathcal{A}$
\begin{equation}\label{v1}
\phi(A\diamond_{1} B) = \phi(A) \diamond_{1} B + A\diamond_{1} \phi( B)
\end{equation}
where $A\diamond_{1} B = AB + BA^{\ast},$  then  $\phi$  is additive
$\ast$-derivation.
\\
\\
Before proving the Main Theorem, we need two lemmas.
\begin{lemma}\label{prop1}
Let $A \in \mathcal{A}.$ Then $ AB = -BA^{\ast}$ for every $B \in
\mathcal{A}$ implies that $ A \in \mathbb{C}I.$
\end{lemma}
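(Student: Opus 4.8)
The plan is to extract structural information from the hypothesis by first specializing $B$ and then feeding that information back into the general identity. The natural first move is to put $B = I$ in the relation $AB = -BA^{\ast}$, which yields $A = -A^{\ast}$; thus $A$ is skew-adjoint. This is the only real content to be uncovered, and it is what makes the $\ast$-Jordan-type condition collapse to something classical.

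Indeed, once we know $A^{\ast} = -A$, for every $B \in \mathcal{A}$ we have $-BA^{\ast} = -B(-A) = BA$, so the hypothesis $AB = -BA^{\ast}$ becomes the ordinary commutation relation $AB = BA$ for all $B \in \mathcal{A}$. Hence $A$ belongs to the center of $\mathcal{A}$. Since $\mathcal{A}$ is a factor, its center is trivial, i.e. $Z(\mathcal{A}) = \mathbb{C}I$, and therefore $A \in \mathbb{C}I$, which is the assertion of the lemma.

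I do not expect a genuine obstacle here: the argument is essentially a one-line specialization followed by the defining property of a factor. The only mildly non-obvious step is recognizing that evaluating at $B = I$ forces $A$ to be skew-adjoint, after which the sign in $AB = -BA^{\ast}$ disappears and the factor hypothesis finishes the proof at once. (If one preferred to avoid invoking the center directly, the same conclusion follows from primeness applied to the identity $(AB - BA)\,\mathcal{A} = \{0\}$, but this is an unnecessary detour.)
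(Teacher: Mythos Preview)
Your argument is correct and coincides with the paper's own proof: specialize to $B=I$ to obtain $A=-A^{\ast}$, deduce $AB=BA$ for all $B$, and conclude $A\in\mathbb{C}I$ from the factor hypothesis. There is nothing to add.
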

\begin{proof} Let $B = I.$ We have $ A = -A^{\ast}$ and thus $AB = BA$ for every $B \in \mathcal{A}.$ Therefore,
$A \in \mathbb{C}I,$ as $\mathcal{A}$ is factor.
\end{proof}

Let $P_{1}\in \mathcal{A}$ be a non-trivial projection and $P_{2} = I
- P_{1}.$ Let $\mathcal{A}_{ij} = P_{i}\mathcal{A} P_{j}$
for $i,j = 1,2,$ we can write $\mathcal{A} =
\sum_{i,j=1,2}\mathcal{A}_{ij}$ such that their pairwise intersections are
$\{0\}$.
\\
\\
In the following Lemma we use the same idea of \cite{yu}.
\begin{lemma}\label{prop2}
 Let $A \in \mathcal{A}.$ Then $ AB = -BA^{\ast}$ for every
 $B \in \mathcal{A}_{12}$
 implies that there exists $ \lambda \in \mathbb{C}$ such that $A = \lambda P_{1} - \overline{\lambda} P_{2}.$
\end{lemma}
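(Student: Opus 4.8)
The plan is to decompose $A$ along the Peirce decomposition $A = \sum_{i,j=1,2} A_{ij}$ with $A_{ij} \in \mathcal{A}_{ij}$, and to test the hypothesis $AB = -BA^{\ast}$ against carefully chosen elements $B \in \mathcal{A}_{12}$. First I would observe that an arbitrary $B \in \mathcal{A}_{12}$ has the form $B = P_1 B P_2$, so $B^{\ast} = P_2 B^{\ast} P_1 \in \mathcal{A}_{21}$. Writing $A = A_{11} + A_{12} + A_{21} + A_{22}$ and expanding $AB + BA^{\ast} = 0$, I would multiply the identity on the left and right by the projections $P_1$ and $P_2$ to separate it into its four Peirce components. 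The components landing in $\mathcal{A}_{11}$ and $\mathcal{A}_{22}$ will directly force $A_{12} B^\ast$, $A_{21} B$-type terms to vanish and, more importantly, relate $A_{11}$ and $A_{22}$; the components in $\mathcal{A}_{12}$ and $\mathcal{A}_{21}$ will constrain the off-diagonal parts $A_{12}, A_{21}$.

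The key steps, in order, would be: (1) show $A_{12} = 0$ and $A_{21} = 0$. For this, note that for $B \in \mathcal{A}_{12}$ the term $A_{21} B$ lies in $\mathcal{A}_{22}$ and the term $B A_{12}^{\ast}$ (coming from $A_{12}^{\ast} \in \mathcal{A}_{21}$, so $B A_{12}^{\ast} \in \mathcal{A}_{11}$) — tracking which Peirce block each product lands in, the vanishing of the $\mathcal{A}_{11}$ and $\mathcal{A}_{22}$ blocks gives $B A_{12}^{\ast} = 0$ and $A_{21} B = 0$ for all $B \in \mathcal{A}_{12}$, equivalently $A_{21} P_1 \mathcal{A} P_2 = \{0\}$ and $P_1 \mathcal{A} P_2 A_{12}^{\ast} = \{0\}$; primeness of $\mathcal{A}$ (or a direct argument using that $P_1 \mathcal{A} P_2$ separates points between $P_2 \mathcal{H}$ and $P_1 \mathcal{H}$) then yields $A_{12} = A_{21} = 0$. (2) With $A = A_{11} + A_{22}$, the surviving relation reads $A_{11} B + B A_{22}^{\ast} = 0$ for all $B \in \mathcal{A}_{12}$ — here I use that for such $B$, $A_{11}B = P_1 A_{11} B P_2 \in \mathcal{A}_{12}$ and $B A_{22}^{\ast} \in \mathcal{A}_{12}$ as well, so this is a genuine constraint tying $A_{11}|_{P_1\mathcal{H}}$ to $A_{22}^{\ast}|_{P_2 \mathcal{H}}$. (3) Conclude from this that both $P_1 A P_1$ and $P_2 A P_2$ are scalar multiples of the respective identities: for any $C \in P_1 \mathcal{A} P_1$ and $D \in P_2 \mathcal{A} P_2$, one can build $B \in \mathcal{A}_{12}$ of the form $CB_0$ or $B_0 D$ and play the relation $A_{11}B = -B A_{22}^{\ast}$ off against itself to get $A_{11} C = C A_{11}$ for all such $C$, hence $A_{11} = \lambda P_1$ by the factor (centrality) property applied within the corner algebra $P_1 \mathcal{A} P_1$; substituting back into $A_{11} B = -B A_{22}^{\ast}$ for a single nonzero $B$ forces $A_{22}^{\ast} = -\lambda P_2$, i.e. $A_{22} = -\overline{\lambda} P_2$. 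Combining, $A = \lambda P_1 - \overline{\lambda} P_2$ as claimed.

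The main obstacle I anticipate is step (3): going from the "intertwining" relation $A_{11}B = -BA_{22}^{\ast}$ on all of $\mathcal{A}_{12}$ to the conclusion that $A_{11}$ is central in the corner $P_1\mathcal{A}P_1$. The subtlety is that $\mathcal{A}_{12}$ is only a bimodule, not an algebra, so I cannot simply substitute products; instead I must exploit that $P_1 \mathcal{A} P_1 \cdot \mathcal{A}_{12} = \mathcal{A}_{12}$ and $\mathcal{A}_{12} \cdot P_2 \mathcal{A} P_2 = \mathcal{A}_{12}$ and that the corner of a factor is again a factor (its center is trivial). Concretely, from $A_{11}(CB) = -(CB)A_{22}^{\ast}$ and $C(A_{11}B) = -C(B A_{22}^{\ast})$ for $C \in P_1\mathcal{A}P_1$ and $B \in \mathcal{A}_{12}$, subtracting gives $(A_{11}C - CA_{11})B = 0$ for all $B \in \mathcal{A}_{12}$, and since such $B$ separate points of $P_1\mathcal{H}$ this forces $A_{11}C = CA_{11}$; the factor property of $P_1 \mathcal{A} P_1$ then delivers $A_{11} = \lambda P_1$. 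Once this scalar is extracted, everything else is routine bookkeeping.
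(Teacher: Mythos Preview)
Your proposal is correct and follows essentially the same route as the paper's proof: Peirce decomposition of $A$, primeness to kill the off-diagonal pieces, and the commutator trick $(A_{11}C - CA_{11})B = 0$ for all $B\in\mathcal{A}_{12}$ to force $A_{11}$ central in the corner $P_1\mathcal{A}P_1$. The only differences are organizational --- you eliminate $A_{12}$ immediately from the $\mathcal{A}_{11}$-block of $AB+BA^{\ast}=0$, whereas the paper postpones this to the very end --- and one loose phrase: ``for a single nonzero $B$'' in your step~(3) should be ``for all $B\in\mathcal{A}_{12}$, hence by primeness'', which is what you actually use elsewhere.
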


\begin{proof} We write  $A = A_{11} + A_{12} + A_{21} + A_{22}.$
From $AB =-BA^{\ast}$ we have $(A_{11} + A_{12} + A_{21} + A_{22})B
= -B(A^{\ast}_{11} + A^{\ast}_{12} + A^{\ast}_{21} +
A^{\ast}_{22})$. Hence, $A_{11}B + A_{21}B = -B A^{\ast}_{12} -
BA^{\ast}_{22}$, for $B \in \mathcal{A}_{12}.$ Multiplying the latter equation  by $P_{2}$ from the left side, implies that $A_{21}B = 0$
and therefore,
\begin{equation}\label{j12}
A_{21} = 0.
\end{equation}
 as $\mathcal{A}$ is
prime.\\
For every $X \in \mathcal{A}_{11}$ and $B
\in \mathcal{A}_{12}$, we can write $AXB = -XBA^{\ast}$ and also $
XAB =-XBA^{\ast}$ since $ XB$ is in $\mathcal{A}_{12}$.  Hence, $(AX -  XA)B = 0 $ so, $(AX -
XA)P_{1}TP_{2} = 0$ for every $T \in \mathcal{A}$. Thus, $(AX -
XA)P_{1} = 0$ because of primeness, so we can write $P_{1}AP_{1}X = XP_{1}AP_{1}$ since $X\in\mathcal{A}_{12}$ and $A\in\mathcal{A}$. Therefore, there
exists $\lambda \in \mathbb{C}$ such
that
\begin{equation}\label{j22}
P_{1}AP_{1} = \lambda P_{1},
\end{equation}
as $\mathcal{A}$ is factor.\\
For every $Y\in \mathcal{A}_{22}$ and $B
\in \mathcal{A}_{12},$
 we can write $ ABY = -BYA^{\ast}$ and also
 $ ABY = -BA^{\ast}Y$, since $BY$ is in $\mathcal{A}_{12}$.\\
By a similar way, we can obtain
\begin{equation}\label{j32}
 P_{2}AP_{2} = \mu P_{2},
\end{equation} 
  for some
$\mu \in \mathbb{C}.$\\
Let $A = P_{1}AP_{1} +
P_{1}AP_{2} + P_{2}AP_{1} + P_{2}AP_{2}$, Equations (\ref{j12}), (\ref{j22}) and (\ref{j32}) imply that
\begin{equation}\label{j44}
A = \lambda P_{1} + \mu P_{2}
+ P_{1}AP_{2}.
\end{equation}
 Also, From $B\in\mathcal{A}_{12}$ and Equation (\ref{j22}) we can write
$P_{1}AP_{1}B=\lambda P_{1}B=\lambda B$, it follows $P_{1}AB=\lambda B$. From the latter Equation and (\ref{j44}) we have
 $$\lambda B = P_{1}AB = -BA^{\ast} =-
\overline{\mu}B -BA^{\ast}P_{1}.$$ Multiplying above equation by $P_{2}$ from the
right side, we have $\lambda B = -\overline{\mu}B$  for every
 $B \in \mathcal{A}_{12}$.
It follows, $\mu = - \overline{\lambda}$ and so, $BA^{\ast}P_{1} = 0$ or  $BP_{2}A^{\ast}P_{1} = 0$ for
every
 $B \in \mathcal{A}_{12}$. Hence, $P_{2}A^{*}P_{1} = 0$ or $P_{1}AP_{2} = 0$. By Equation (\ref{j44}), we obtain $A = \lambda P_{1} -\overline{\lambda} P_{2},$ where $\lambda\in\mathbb{C}$. This
completes the proof of Lemma.
\end{proof}


Now we prove our Main Theorem in several Steps.
\begin{step}\label{s1}
 $\phi(0) = 0$ and $\phi(P_{i})$ are self-adjoint for $i = 1, 2.$
\end{step}
By Equation (\ref{v1}), it is easy to obtain $\phi(0) = 0.$\\
Now, we prove that $\phi(P_{i})$ are self-adjoint for $i = 1, 2.$
Let $A$ be a self-adjoint operator in $\mathcal{A}$.
Since $A\diamond_{1} P_{i}=P_{i}\diamond_{1} A$, we can write $\phi(A\diamond_{1}
P_{i})=\phi(P_{i}\diamond_{1} A)$. Then, by Equation (\ref{v1}) we have
$$\phi(A)P_{i}+P_{i}\phi(A)^{*}+A\phi(P_{i})+\phi(P_{i})A=\phi(P_{i})A+A\phi(P_{i})^{*}+P_{i}\phi(A)+\phi(A)P_{i},$$
or,
$$A(\phi(P_{i})-\phi(P_{i})^{*})=P_{i}(\phi(A)-\phi(A)^{*}).$$
We multiply above equation by $P_{j}$ from left side, it follows 
$$P_{j}A(\phi(P_{i})-\phi(P_{i})^{*})=0,$$
for all $A\in\mathcal{A}$. It means
$P_{j}\mathcal{A}(\phi(P_{i})-\phi(P_{i})^{*})= \lbrace 0 \rbrace.$
So, we have $\phi(P_{i})=\phi(P_{i})^{*}$, for $i = 1, 2$ by
primeness property of $\mathcal{A}$.
\begin{step}\label{s2}
Let $U = P_{1}\phi(P_{1})P_{2} - P_{2}\phi(P_{1})P_{1}$, we have
\end{step}
(a) for every $A \in \mathcal{A}_{12},\ \ \phi(A) = AU - UA +
P_{1}\phi (A)P_{2}$

(b) for every $B \in \mathcal{A}_{21},\  \ \phi(B) = BU - UB  +
P_{2}\phi (B)P_{1}$

(c) there exist $\alpha_{i} \in \mathbb{C}$ such that $\phi(P_{i}) =
P_{i}U - UP_{i} + \alpha _{i}P_{i}$ for every $i = 1,2.$
\\
\\
(a) Let $A \in \mathcal{A}_{12}$, we can obtain $A = P_{1}\diamond_{1} A$, by Step \ref{s1}
we have
 $$\phi(A) = \phi (P_{1})A  +  A\phi (P_{1})  +  P_{1}\phi (A)  +  \phi
 (A)P_{1}.$$
Multiplying above equation by $P_{1}$ and $P_{2}$ from two sides, respectively, we consider the following equation
have
$$P_{1}\phi(A)P_{1} = - A\phi(P_{1})P_{1},$$
$$P_{2}\phi (A)P_{2} =
P_{2}\phi (P_{1})A,$$
 \begin{equation}\label{v2}
 P_{1}\phi (P_{1})A = - A \phi(P_{1})P_{2}.
 \end{equation}
  On
the other hand, from $P_{1}\diamond_{1} P_{2} = 0$ we have
$$\phi(P_{1})P_{2} + P_{2}\phi(P_{1})  +  P_{1} \phi(P_{2}) +
\phi(P_{2})P_{1} = 0.$$ Hence, multiplying above equation by $P_{2}$ from right and left side, we have
\begin{equation}\label{vr56}
P_{2}\phi(P_{1})P_{2} = 0.
\end{equation}
Since $A\diamond_{1} P_{1} = 0,$ for $A\in\mathcal{A}_{12}$, we have $\phi(A)P_{1}  +  P_{1}\phi
(A)^{\ast}  +  A\phi(P_{1})  +  \phi(P_{1})A^{\ast} = 0.$
Multiplying the latter equation by $P_{2}$  from the left side, it
is clear that
$$P_{2}\phi(A)P_{1}  +
 P_{2}\phi(P_{1})A^{\ast} = 0$$ and it follows $$P_{2}\phi(A)P_{1}  +  P_{2}\phi(P_{1})
 P_{2}A^{\ast} P_{1} = 0,$$ since $A \in\mathcal{A}_{12}.$ The Equation (\ref{vr56}) shows $P_{2}\phi(A)P_{1} =  0.$
Hence, by assumption of $U$ and Equation (\ref{v2}) we have
\begin{eqnarray*}
\phi(A) &=& P_{2}\phi(A)P_{2}  +
P_{1}\phi(A)P_{1} + P_{2}\phi(A)P_{1}  + P_{1}\phi(A)P_{2}\\
&=&P_{2}\phi(P_{1})A  - A\phi(P_{1})P_{1}  + P_{1}\phi(A)P_{2}\\
&=& AU - UA  + P_{1}\phi(A)P_{2}.
\end{eqnarray*}
\\
(b) Let $B \in \mathcal{A}_{21}.$ From $B = P_{2}\diamond_{1} B$ and
similar to (a) we can obtain $P_{2}\phi(B)P_{2} =  -
B\phi(P_{2})P_{2}$,  $P_{1}\phi (B)P_{1} = P_{1}\phi (P_{2})B$ and $
P_{1}\phi(B)P_{2} = 0.$\\ Hence \begin{eqnarray*} \phi(B) &=&
P_{1}\phi(B)P_{1} + P_{2}\phi(B)P_{2} + P_{2}\phi(B)P_{1} +
P_{1}\phi(B)P_{2}\\
&=&P_{1}\phi(P_{2})B  - B\phi(P_{2})P_{2}  + P_{2}\phi(B)P_{1}.
\end{eqnarray*}
On the other hand, from relation $P_{2}\diamond_{1} P_{1} = 0$ we can
obtain
$$
P_{1}\phi(P_{2})P_{2} = -  P_{1}\phi(P_{1})P_{2}.$$ Multiplying
above equation by $B$ from two sides. We have $B
\phi(P_{2})P_{2} =  -  B\phi(P_{1})P_{2}$ and $P_{1}\phi(P_{2})B = -
P_{1}\phi(P_{1})B.$ Therefore, $$\phi(B) = B\phi(P_{1})P_{2}  -
P_{1}\phi(P_{1})B  + P_{2}\phi(A)P_{1}$$ and from assumption of $U$
we have
$$
\phi(B) = BU - UB  + P_{2}\phi(B)P_{1}.$$
 (c) For every  $X \in
\mathcal{A}_{11}$ and  $A \in \mathcal{A}_{12}$ and from relation
(\ref{v2}) we can write $P_{1}\phi(P_{1})XA = - XA \phi(P_{1})P_{2}$
and $ XP_{1}\phi(P_{1})A = - XA\phi(P_{1})P_{2}.$ Therefore,\\
$[P_{1}\phi(P_{1})X -  XP_{1}\phi(P_{1})]A = 0 $ and so
$[P_{1}\phi(P_{1})X - X P_{1} \phi(P_{1}) P_{1}] \mathcal{A} P_{2} =
\lbrace0\rbrace$, for $A \in \mathcal{A}_{12}$.  By the primeness of $\mathcal{A}$ and  $X \in
\mathcal{A}_{11},$ it
is clear that $ P_{1}\phi(P_{1})P_{1}X = XP_{1} \phi(P_{1})P_{1}.$\\
Since $\mathcal{A}$ is factor, $P_{1} \phi(P_{1}) P_{1} =
\alpha_{1}P_{1}$ for some $\alpha_{1} \in \mathbb{C}.$ Hence, from
Equation (\ref{vr56}) we can write
$$\phi(P_{1}) = P_{1}\phi(P_{1})P_{2} + P_{2}\phi(P_{1})P_{1} + P_{1} \phi(P_{1})P_{1} + P_{2}\phi(P_{1})P_{2} = P_{1}U - UP_{1} + \alpha
_{1}P_{1}.$$ Similar to this way, we can obtain $\phi(P_{2}) =
P_{2}U  -  UP_{2}  +  \alpha _{2}P_{2}$ for some $\alpha_{1} \in \mathbb{C}.$

\begin{remark}\label{r66}
  Let $\psi(X) = \phi(X)  -  (XU  -  UX)$ for all $X \in
\mathcal{A}.$ By a calculation, we can show that $\psi$ is
$\ast$-Jordan derivation and so, by previous Steps, $\psi(P_{i})$
are self-adjoint and 
\begin{equation}\label{j66}
\psi(P_{i}) = \alpha_{i}P_{i}
\end{equation}
 for $i = 1,2.$ Also, this shows that $\alpha_{i}$ are real.
\end{remark}
\begin{step}\label{s3}
 By assumption of $\psi,$ for every $i,j = 1,2,$ we have  $\psi(\mathcal{A}_{ij}) \subseteq \mathcal{A}_{ij}.$
\end{step}
Let $i \neq j$, Step \ref{s2} and Remark \ref{r66} show that $\psi(\mathcal{A}_{ij})
\subseteq \mathcal{A}_{ij}.$\\
Let $X \in \mathcal{A}_{ii}$, for $i = 1,2$. We have $P_{j} \diamond_{1} X = 0$, so $$\psi (P_{j})X
+ X \psi (P_{j})  +  P_{j}\psi (X)+\psi (X)P_{j} = 0,$$ since  $\psi (P_{j}) \in
\mathcal{A}_{jj},$ by Equation (\ref{j66}). Therefore,
$P_{j}\psi (X)+\psi (X)P_{j} = 0 $. Then, by multiplying the latter equation by $P_{i}$ from right and left side respectively, and $P_{j}$ from both side, we have $P_{i}\psi(X)P_{j}
= P_{j}\psi (X)P_{i} = P_{j}\psi (X)P_{j} = 0.$ Hence $\psi(X) \in \mathcal{A}_{ii}.$

\begin{step}\label{s4}
 For $i,j \in \{ 1,2\}$ with $i\neq j$, we have

(a) $\psi (A_{ii}  +  A_{jj}) = \psi (A_{ii})  +  \psi( A_{jj})$

(b) $\psi (A_{ii}  +  A_{ij}) = \psi (A_{ii})  +  \psi( A_{ij})$

(c) $\psi (A_{ii}  +  A_{ji}) = \psi (A_{ii})  +  \psi( A_{ji})$

(d) $\psi (A_{ij}  +  A_{ji}) = \psi (A_{ij})  +  \psi( A_{ji}).$\\
\end{step}
\noindent(a) From $P_{i} \diamond_{1} (A_{ii}  +  A_{jj}) = P_{i} \diamond_{1} A_{ii}$ we
have $$ \psi(P_{i}) \diamond_{1} (A_{ii}  +  A_{jj})  +  P_{i} \diamond_{1}
\psi(A_{ii}  +  A_{jj}) = \psi(P_{i}) \diamond_{1} A_{ii}  +  P_{i} \diamond_{1}
 \psi(A_{ii}),$$
So, $$ \psi(P_{i}) \diamond_{1} A_{ii}  + \psi(P_{i}) \diamond_{1} A_{jj}  + P_{i}
\diamond_{1} \psi(A_{ii}  +  A_{jj}) = \psi(P_{i}) \diamond_{1} A_{ii}  + P_{i}
\diamond_{1} \psi(A_{ii}).$$ Since $\psi(P_{i})$ is a real multiple of
$P_{i},$ by Equation (\ref{j66}). Above equation can be written as $$P_{i} \diamond_{1} \psi(A_{ii}  +  A_{jj}) =  P_{i}
\diamond_{1} \psi(A_{ii}).$$ Hence, $P_{i} \diamond_{1} K = 0$ where $K = \psi
(A_{ii}  +  A_{jj})  -  \psi (A_{ii}).$ This implies that $P_{i}K  +
KP_{i} = 0$ and so, $$P_{i}KP_{i} = P_{i}KP_{j} = P_{j}KP_{i} = 0.$$
Therefore, from $\psi(A_{ii}) \in \mathcal{A}_{ii}$ we have
\begin{equation}\label{mor}
P_{i}\psi (A_{ii}  +  A_{jj})P_{i} = \psi (A_{ii}),
\end{equation}
and 
\begin{equation}\label{mor3}
P_{i}\psi (A_{ii}  +  A_{jj})P_{j} = P_{j}\psi (A_{ii}  +
A_{jj})P_{i} = 0.
\end{equation}
A similar method shows 
\begin{equation}\label{mor2} 
 P_{j}\psi (A_{ii}  +  A_{jj})P_{j} =
 \psi (A_{jj}),
\end{equation} 
since $P_{j} \diamond_{1} (A_{ii}  + A_{jj}) = P_{j}
\diamond_{1}
 A_{jj}$. \\
 On the other hand, we can write $$\psi(A_{ii}+A_{jj})=P_{i}\psi(A_{ii}+A_{jj})P_{i}+P_{i}\psi(A_{ii}+A_{jj})P_{j}+P_{j}\psi(A_{ii}+A_{jj})P_{i}+P_{j}\psi(A_{ii}+A_{jj})P_{j}$$
So, by Equations (\ref{mor}), (\ref{mor3}) and (\ref{mor2}) we have the following
 $$\psi (A_{ii}  +  A_{jj}) = \psi (A_{ii})  +  \psi( A_{jj}).$$
(b) From $P_{j} \diamond_{1} (A_{ii}  +  A_{ij}) = P_{j} \diamond_{1} A_{ij}$ we
can write $$ \psi(P_{j}) \diamond_{1} (A_{ii}  +  A_{ij})  +  P_{j} \diamond_{1}
 \psi(A_{ii}  +  A_{ij}) = \psi(P_{j}) \diamond_{1} A_{ij}  +  P_{j} \diamond_{1}
 \psi(A_{ij}).$$
Let $ K = \psi (A_{ii}  +  A_{ij})  -  \psi (A_{ij}).$ Since
 $\psi(P_{j})$ is a real multiple of $P_{j},$ we can write $P_{j} \diamond_{1} K = 0.$ Thus, $P_{j}K  +  KP_{j} = 0$ and so
$$P_{i}KP_{j} = P_{j}KP_{i} = P_{j}KP_{j} = 0.$$
Therefore, 
\begin{equation}\label{m44}
 P_{i}\psi (A_{ii}  +  A_{ij})P_{j} = \psi (A_{ij})
\end{equation} 
and
\begin{equation}\label{m55}
P_{j}\psi (A_{ii}  +  A_{ij})P_{i} = P_{j}\psi (A_{ii}  +
A_{ij})P_{j} = 0.
\end{equation}
On the other hand, $(A_{ii}  +  A_{ij}) \diamond_{1} X_{ii} = A_{ii} \diamond_{1}
 X_{ii}$ for every $X_{ii} \in \mathcal{A}_{ii}.$ Hence, $ \psi[(A_{ii}  + A_{ij}) \diamond_{1} X_{ii}] = \psi(A_{ii} \diamond_{1} X_{ii})$ and so
$$ \psi(A_{ii}  + A_{ij}) \diamond_{1} X_{ii} +(A_{ii}  + A_{ij}) \diamond_{1}
\psi( X_{ii}) = \psi(A_{ii}) \diamond_{1} X_{ii} + A_{ii} \diamond_{1}
\psi(X_{ii}).$$ This shows $L \diamond_{1} X_{ii}= 0$ where $ L = \psi
(A_{ii}  +  A_{ij})  -
 \psi (A_{ii}).$ Thus, $ L X_{ii}=  - X_{ii}L^{\ast}$ and from Lemma \ref{prop1}. We
have $P_{i}LP_{i} = \lambda P_{i}$ for some $\lambda \in
\mathbb{C}.$ This means 
\begin{equation}\label{m66}
P_{i}\psi (A_{ii}  +  A_{ij})P_{i} =
\psi(A_{ii})  + \lambda P_{i}.
\end{equation}
Therefore, by Equations (\ref{m44}), (\ref{m55}) and (\ref{m66}), we have
 $$ \psi (A_{ii}  +
A_{ij}) = \psi(A_{ii})  +
 \psi(A_{ij})  +  \lambda P_{i}.$$
By applying this method, there exists $\alpha \in \mathbb{C}$ such that
\begin{eqnarray*}
\psi[(A_{ii}  +  A_{ij})\diamond_{1} X_{ij}] &=&
 \psi(A_{ii}X_{ij}  +  X_{ij}A^{\ast}_{ij})\\
&=& \psi(A_{ii}X_{ij})  + \psi(X_{ij}A^{\ast}_{ij})  +  \alpha
P_{i},
\end{eqnarray*}
for every $X_{ij} \in \mathcal{A}_{ij}.$ Also
\begin{eqnarray*}
\psi[(A_{ii}  +  A_{ij})\diamond_{1} X_{ij}] &=& \psi(A_{ii}  +
 A_{ij})\diamond_{1} X_{ij}  +  (A_{ii}  +  A_{ij})\diamond_{1} \psi( X_{ij})\\
&=& [\psi(A_{ii})  +  \psi(A_{ij})  +  \lambda
 P_{i}]\diamond_{1} X_{ij}  +  A_{ii}\diamond_{1} \psi( X_{ij})  +  A_{ij}\diamond_{1} \psi(
 X_{ij})\\
& =& \psi(A_{ii}\diamond_{1} X_{ij})  +  \psi(A_{ij}\diamond_{1} X_{ij})  +
\lambda P_{i}\diamond_{1} X_{ij}\\
&=& \psi(A_{ii}X_{ij})  + \psi( X_{ij}A^{\ast}_{ij})  +  \lambda
X_{ij}.
\end{eqnarray*}
Then $\alpha P_{i} = \lambda X_{ij}$ and so $\alpha P_{i} P_{i}=
 \lambda X_{ij}P_{i} = 0.$ So, $\alpha = 0$ and $\lambda = 0.$\\
 This implies that $\psi (A_{ii}  +  A_{ij}) = \psi (A_{ii})  +  \psi(
 A_{ij}).$\\
 (c) Let $X_{ji} \in \mathcal{A}_{ji}$, then,
$$\psi [(A_{ii} + A_{ji})\diamond_{1} X_{ji}] = \psi(A_{ii} + A_{ji})\diamond_{1}
X_{ji}  +  (A_{ii}  +  A_{ji})\diamond_{1} \psi (X_{ji}).$$ On the other
hand, it follows from (a)
\begin{eqnarray*} \psi [(A_{ii}  +
A_{ji})\diamond_{1} X_{ji}] &=& \psi(X_{ji}A^{\ast}_{ii} +
X_{ji}A^{\ast}_{ji})\\
&=&  \psi(X_{ji}A^{\ast}_{ii})  +
 \psi(X_{ji}A^{\ast}_{ji})\\
&=& \psi(A_{ii} \diamond_{1} X_{ji})  +  \psi(A_{ji} \diamond_{1} X_{ji})\\
&=& \psi(A_{ii}) \diamond_{1} X_{ji}  + A_{ii}\diamond_{1}\psi(X_{ji}) +
 \psi(A_{ji}) \diamond_{1} X_{ji} + A_{ji}\diamond_{1} \psi( X_{ji}) \\
&=& [\psi(A_{ii})  +  \psi(A_{ji})] \diamond_{1} X_{ji}  +  (A_{ii}  +
A_{ji}) \diamond_{1} \psi (X_{ji}).
\end{eqnarray*}
Therefore, $$\psi(A_{ii} + A_{ji})\diamond_{1} X_{ji} + (A_{ii} +
A_{ji})\diamond_{1} \psi (X_{ji}) = [\psi(A_{ii}) + \psi(A_{ji})] \diamond_{1}
X_{ji} + (A_{ii}  + A_{ji}) \diamond_{1}\psi( X_{ji}).$$ Hence, $K \diamond_{1}
X_{ji} = 0$ where $K = \psi (A_{ii}  +  A_{ji})  -
 \psi(A_{ii})  -  \psi(A_{ji}).$ So, $K X_{ji} =  - X_{ji}K^{\ast}$
for all $X_{ji} \in \mathcal{A}_{ji}.$  By using Lemma \ref{prop2},
we have
$$K =
 \alpha P_{j}  -  \overline{\alpha}P_{i}$$ for some  $\alpha \in
 \mathbb{C}.$ This implies
$$  \psi (A_{ii}  +  A_{ji}) = \psi (A_{ii})  +  \psi( A_{ji})  +  \alpha P_{j}  -
\overline{\alpha}P_{i}.$$ Since $X_{jj}\diamond_{1}   A_{ji} = X_{jj}\diamond_{1}
(A_{ii}  +  A_{ji})$ for all
 $X_{jj} \in A_{jj}$ and $\psi(X_{jj}) \in \mathcal{A}_{jj},$ we can write
 \begin{eqnarray*}
 \psi(X_{jj})\diamond_{1} A_{ji} + X_{jj}\diamond_{1} \psi( A_{ji}) &=&
 \psi(X_{jj})\diamond_{1} (A_{ii}  +  A_{ji})  +  X_{jj}\diamond_{1} \psi(A_{ii}  +
 A_{ji})\\
&=& \psi(X_{jj})\diamond_{1} A_{ji} +  X_{jj}\diamond_{1} \psi(A_{ii}  +
A_{ji})\\
&=& \psi(X_{jj})\diamond_{1} A_{ji} +  X_{jj}\diamond_{1} [ \psi (A_{ii})  + \psi(
A_{ji})  +  \alpha P_{j}  -  \overline{\alpha}P_{i}]\\
&=& \psi(X_{jj})\diamond_{1} A_{ji}  +  X_{jj}\diamond_{1} \psi(A_{ji})  +
X_{jj}\diamond_{1} \alpha
 P_{j}.
\end{eqnarray*}
So, $X_{jj}\diamond_{1} \alpha P_{j} = 0.$ Thus, $\alpha X_{jj} =  - \alpha
X^{\ast}_{jj}$ for all $X_{jj} \in
 \mathcal{A}_{jj}$ and so $\alpha = 0.$ Hence,
$$\psi (A_{ii}  +  A_{ji}) = \psi (A_{ii})  +  \psi( A_{ji})$$
(d) From the fact $P_{i} \diamond_{1} (A_{ij}  +  A_{ji}) =A_{ij}  +
A_{ji},$ we have
 $$\psi(P_{i})(A_{ij} +  A_{ji})  +  (A_{ij}  +  A_{ji})\psi(P_{i})  +  P_{i}\psi(A_{ij}  +  A_{ji})  +  \psi(A_{ij}  +  A_{ji})P_{i} = \psi(A_{ij}  +
 A_{ji}).$$
Multiplying above equation by $P_{j}$ from two sides, we have
$$P_{j}\psi(A_{ij} + A_{ji})P_{j} = 0.$$
Similarly, from $P_{j} \diamond_{1} (A_{ij}  +  A_{ji}) =A_{ij}  +
A_{ji},$ we have $P_{i}\psi(A_{ij}  +  A_{ji})P_{i} = 0.$\\
On the other hand, from $(A_{ij} + A_{ji}) \diamond_{1} P_{i} = A_{ji}\diamond_{1}
P_{i}$ we have $$\psi(A_{ij}  +  A_{ji}) \diamond_{1} P_{i}  + (A_{ij}  +
A_{ji})\diamond_{1} \psi( P_{i}) =  \psi(A_{ji})\diamond_{1} P_{i}  + A_{ji}\diamond_{1}
\psi(P_{i}).$$ This implies that $K \diamond_{1} P_{i} = 0 $ where $K =
\psi (A_{ij}  +
 A_{ji})  -  \psi( A_{ji}).$ So, $ KP_{i}  +  P_{i}K^{\ast} = 0.$ Thus $ P_{j}KP_{i} = 0$ and so $ P_{j}\psi(A_{ij}  +  A_{ji})P_{i} =
 \psi(A_{ji}).$\\
Similarly, from $ (A_{ij} + A_{ji})\diamond_{1} P_{j} = A_{ij}\diamond_{1} P_{j},$
we can obtain $P_{i}\psi(A_{ij}  +  A_{ji})P_{j} = \psi(A_{ij}).$\\
These relations show that $$\psi (A_{ij}  +  A_{ji}) = \psi (A_{ij})
+  \psi( A_{ji}).$$
\begin{step}\label{s5} For $1\leq i \neq j \leq2,$ we have $\psi (\sum_{i,j = 1,2}A_{ij}) =  \psi(A_{ii}) + \psi(A_{ij}) + \psi(A_{ji}) + \psi(A_{jj}).$
\end{step}
First we show that $$ \psi (A_{ii} + A_{ij} + A_{ji}) = \psi(A_{ii})
+ \psi(A_{ij}) + \psi(A_{ji}).$$ From $P_{j}\diamond_{1} (A_{ii} + A_{ij} +
A_{ji}) = P_{j}\diamond_{1} ( A_{ij} + A_{ji})$ and part (d) of Step
\ref{s4} we have
$$\psi(P_{j})\diamond_{1} (A_{ii} + A_{ij} + A_{ji}) + P_{j}\diamond_{1} \psi(A_{ii} + A_{ij} + A_{ji})  =
\psi(P_{j})\diamond_{1} ( A_{ij} + A_{ji}) + P_{j}\diamond_{1}(\psi ( A_{ij})
+\psi( A_{ji})). $$ So, $ P_{j}\diamond_{1} \psi(A_{ii} + A_{ij} + A_{ji})
=
P_{j}\diamond_{1}[\psi ( A_{ij}) +\psi( A_{ji})].$\\
 Hence, $P_{j}\diamond_{1} K =
0 $ where $ K = \psi (A_{ii} + A_{ij} + A_{ji}) - \psi(A_{ij}) -
\psi(A_{ji}).$ Then $P_{j} K + KP_{j} = 0 $ and so, $P_{j} K P_{j} =
P_{i}KP_{j} =P_{j} K P_{i}= 0. $ Thus we have
$$P_{j} \psi (A_{ii} +
A_{ij} + A_{ji}) P_{j}= 0,$$ $$P_{i}\psi (A_{ii} + A_{ij} + A_{ji})
P_{j} = \psi(A_{ij})$$ and $$P_{j}\psi (A_{ii} + A_{ij} + A_{ji})
P_{i} = \psi(A_{ji}).$$ On the other hand, from $(A_{ii} + A_{ij} +
A_{ji}) \diamond_{1} T_{ii} = (A_{ii} + A_{ji}) \diamond_{1} T_{ii} $ and part (c)
of Step \ref{s4} we have $L\diamond_{1} T_{ii} = 0 $ where $L  = \psi
(A_{ii} + A_{ij} + A_{ji}) - \psi(A_{ii}) - \psi(A_{ji}).$ Thus, $ L
T_{ii} + T_{ii}L^{\ast}= 0.$ By Lemma \ref{prop1}, the latter
equation yields $P_{i}LP_{i} =\alpha P_{i}$ for some $\alpha \in
\mathbb{C}.$ Thus,
$$P_{i}\psi (A_{ii} + A_{ij} + A_{ji}) P_{i} = \psi (A_{ii}) + \alpha
P_{i}.$$ Hence, we obtain
 $$\psi (A_{ii} + A_{ij} + A_{ji}) = \psi (A_{ii}) + \psi (A_{ij}) + \psi (A_{ji}) + \alpha
 P_{i}.$$
We will show that $\alpha = 0.$ From above relation, for every
$T_{ii} \in \mathcal{A}_{ii},$ there exists $\lambda \in \mathbb{C}$
such that
\begin{eqnarray}
\psi[T_{ii} \diamond_{1} (A_{ii} + A_{ij} + A_{ji})]& =&  \psi(T_{ii}A_{ii}
+ T_{ii}A_{ij} +A_{ii}T^{\ast}_{ii} + A_{ji}T^{\ast}_{ii}) \nonumber\\
&=&  \psi(T_{ii}A_{ii} +A_{ii}T^{\ast}_{ii}) +
\psi(T_{ii}A_{ij})\nonumber\\
&& + \psi (A_{ji}T^{\ast}_{ii}) + \lambda P_{i}.\label{eq4}
\end{eqnarray}
On the other hand,
\begin{eqnarray*}
\psi[T_{ii} \diamond_{1} (A_{ii} + A_{ij} + A_{ji})]&=&  \psi(T_{ii}) \diamond_{1}
(A_{ii} + A_{ij} + A_{ji}) + T_{ii} \diamond_{1} \psi (A_{ii} + A_{ij} +
A_{ji})\\
&=&  \psi(T_{ii})  \diamond_{1} (A_{ii} + A_{ij} + A_{ji}) + T_{ii} \diamond_{1} (\psi (A_{ii}) +\psi (A_{ij})\\
&&+\psi (A_{ji}) + \alpha P_{i})\\
&=&  \psi(T_{ii}) \diamond_{1} A_{ii} + T_{ii}\diamond_{1}\psi (A_{ii}) +
\psi(T_{ii}) \diamond_{1} A_{ij} + T_{ii}\diamond_{1}\psi (A_{ij}) \\
&&+ \psi(T_{ii}) \diamond_{1} A_{ji} + T_{ii}\diamond_{1}\psi (A_{ji}) + T_{ii}
\diamond_{1} \alpha
P_{i}\\
&=&  \psi(T_{ii} \diamond_{1} A_{ii}) +  \psi(T_{ii} \diamond_{1} A_{ij})  +
\psi(T_{ii} \diamond_{1} A_{ji}) + \alpha(T_{ii} + T^{\ast}_{ii})\\
&=& \psi(T_{ii}A_{ii} + A_{ii}{T_{ii}}^{*})  + \psi(T_{ii} A_{ij}) +
\psi(A_{ji}{T_{ii}}^{*})+ \alpha (T_{ii}\\
&& + {T_{ii}}^{*}).
\end{eqnarray*}
Therefore, from relation (\ref{eq4}) we have $\lambda  P_{i} =
\alpha(T_{ii} +
T^{\ast}_{ii})$  for every $ T_{ii} \in \mathcal{A}_{ii}.$\\
Thus $\lambda = \alpha = 0$ and finally we have
\begin{equation}\label{equ5}
 \psi (A_{ii} +
A_{ij} + A_{ji}) =  \psi(A_{ii}) + \psi(A_{ij}) + \psi(A_{ji}).
\end{equation}
Now, we prove 
$$  \psi (A_{ii}+A_{ij}+A_{ji}+A_{jj}) =  \psi(A_{ii}) + \psi(A_{ij}) + \psi(A_{ji}) +
\psi(A_{jj}).$$
Since $P_{i} \diamond_{1}  (\sum_{i,j = 1,2}A_{ij}) = P_{i} \diamond_{1} (A_{ii} +
A_{ij} + A_{ji})$, we have $$\psi(P_{i}) \diamond_{1}  (\sum_{i,j =
1,2}A_{ij}) +P_{i} \diamond_{1}  \psi(\sum_{i,j = 1,2}A_{ij})
 = \psi(P_{i}) \diamond_{1} (A_{ii} + A_{ij} + A_{ji}) + P_{i} \diamond_{1} \psi(A_{ii} + A_{ij} +
 A_{ji}).$$
From relation (\ref{equ5}) we have $P_{i} \diamond_{1} M =0$ where $$M =
\psi (\sum_{i,j = 1,2}A_{ij}) -  \psi(A_{ii}) - \psi(A_{ij}) -
\psi(A_{ji}).$$ Hence, $P_{i} M + MP_{i} =0.$ This implies that
$P_{i}MP_{i} = P_{i}MP_{j}  = P_{j}MP_{i} = 0.$ Therefore,

$$P_{i}\psi (\sum_{i,j = 1,2}A_{ij})P_{i} =  \psi(A_{ii}),$$$$ P_{i}\psi (\sum_{i,j = 1,2}A_{ij})P_{j} =
\psi(A_{ij})$$ and $$ P_{j}\psi (\sum_{i,j = 1,2}A_{ij})P_{i} =
\psi(A_{ji}).$$
 Since $P_{j} \diamond_{1} (\sum_{i,j = 1,2}A_{ij}) = P_{j}
\diamond_{1}   (A_{ij} + A_{ji} + A_{jj})$  by a similar method, we have $$
P_{j}\psi (\sum_{i,j = 1,2}A_{ij})P_{j} =  \psi(A_{jj}).$$
 Finally,
we have
$$  \psi (\sum_{i,j = 1,2}A_{ij}) =  \psi(A_{ii}) + \psi(A_{ij}) + \psi(A_{ji}) +
\psi(A_{jj}).$$
\begin{step}\label{s6}
$\psi (A_{ij} + B_{ij}) = \psi (A_{ij}) + \psi( B_{ij})$ for every
$A_{ij} , B_{ij} \in \mathcal{A}_{ij}$ such that $ i,j = 1,2.$
\end{step}
Let $i \neq j$, then $ T_{ij} + T^{\ast}_{ij} = T_{ij}\diamond_{1} P_{j},$
and so, by part (d) of Step \ref{s4}, we have

 \begin{eqnarray*} \psi(T_{ij}) +
\psi(T^{\ast}_{ij}) &=& \psi(T_{ij})\diamond_{1} P_{j} + T_{ij}\diamond_{1}
\psi(P_{j})\\
&=& \psi(T_{ij}) + \psi(T_{ij})^{\ast} + T_{ij} \psi(P_{j}) +
\psi(P_{j})T^{\ast}_{ij}. \end{eqnarray*}
 Multiplying above equation by $P_{j}$
from the right side, we have $ T_{ij} \psi(P_{j}) = 0$ for every
$T_{ij} \in \mathcal{A}_{ij}.$ So, $ \psi(P_{j}) = 0$ for $j =
1,2.$\\
Let $A_{ij} , B_{ij} \in \mathcal{A}_{ij}$ such that $(i\neq j)$.
Then,
$$ A_{ij}
+ B_{ij} + A^{\ast}_{ij} + B_{ij}A^{\ast}_{ij} = (P_{i} + A_{ij})
\diamond_{1}(P_{j} + B_{ij}),$$ and so, by Steps \ref{s4} and \ref{s5}, we
have
\begin{eqnarray*}
\psi (A_{ij} + B_{ij}) + \psi ( A^{\ast}_{ij}) +
\psi(B_{ij}A^{\ast}_{ij})&=& \psi( A_{ij} + B_{ij}+ A^{\ast}_{ij} +
B_{ij}A^{\ast}_{ij})\\
&=& \psi[(P_{i} + A_{ij}) \diamond_{1}(P_{j} + B_{ij})]\\
&=&\psi(P_{i} + A_{ij}) \diamond_{1}(P_{j} + B_{ij})\\
&& + (P_{i}+
A_{ij})\diamond_{1} \psi(P_{j} + B_{ij})\\
&=& [\psi(P_{i}) +\psi (A_{ij})] \diamond_{1}(P_{j} + B_{ij})\\
&& + (P_{i} +
A_{ij}) \diamond_{1} [\psi(P_{j}) + \psi (B_{ij})]\\
& =& \psi (A_{ij}) \diamond_{1}(P_{j} + B_{ij}) + (P_{i} + A_{ij}) \diamond_{1}
\psi (B_{ij})\\
&=& \psi (A_{ij}) + \psi (B_{ij}) + \psi ( A_{ij})^{\ast} +
B_{ij}\psi (A_{ij})^{\ast}\\
&& + \psi (B_{ij})A^{\ast}_{ij}.
\end{eqnarray*}
Multiplying by $P_{j}$ from the right side implies that $\psi
(A_{ij} + B_{ij}) = \psi (A_{ij}) + \psi( B_{ij})$ for every
$A_{ij} , B_{ij} \in \mathcal{A}_{ij}$  such that $i\neq j.$\\
Let $A_{ii} , B_{ii} \in \mathcal{A}_{ii}$ and $T_{ij}
\in\mathcal{A}_{ij}.$ It follows from above relation that
\begin{eqnarray*}
\psi[(A_{ii} + B_{ii})\diamond_{1} T_{ij}] &=&  \psi(A_{ii}T_{ij}+
B_{ii}T_{ij})\\
&=& \psi(A_{ii}T_{ij}) + \psi (B_{ii}T_{ij})\\
&=&\psi(A_{ii}\diamond_{1} T_{ij}) + \psi(B_{ii}\diamond_{1} T_{ij})\\
&=&\psi(A_{ii})\diamond_{1} T_{ij} + A_{ii}\diamond_{1} \psi (T_{ij}) +
\psi(B_{ii})\diamond_{1} T_{ij} + B_{ii}\diamond_{1} \psi(T_{ij})\\
&=&\psi(A_{ii})T_{ij} + A_{ii}\psi(T_{ij}) + \psi(B_{ii})T_{ij} +
B_{ii}\psi(T_{ij}).
\end{eqnarray*}
So, 
$$\psi[(A_{ii} + B_{ii})\diamond_{1} T_{ij}]=\psi(A_{ii})T_{ij} + A_{ii}\psi(T_{ij}) + \psi(B_{ii})T_{ij} +
B_{ii}\psi(T_{ij}).$$
On the other hand, since $\psi(A_{ii} + B_{ii})\in\mathcal{A}_{ii}$ and above equation, we have
\begin{eqnarray*}
 \psi[(A_{ii} + B_{ii})\diamond_{1} T_{ij}] &=& \psi(A_{ii} + B_{ii})\diamond_{1}
T_{ij} + (A_{ii} + B_{ii})\diamond_{1} \psi (T_{ij})\\
&=& \psi(A_{ii} +B_{ii})T_{ij} + A_{ii}\psi(T_{ij}) +
B_{ii}\psi(T_{ij}).
\end{eqnarray*}
Hence, $[\psi(A_{ii} + B_{ii}) -  \psi(A_{ii}) -  \psi(B_{ii})]
T_{ij} = 0$ for every $T_{ij} \in \mathcal{A}_{ij}.$ This implies
that $\psi(A_{ii} + B_{ii}) =  \psi(A_{ii}) +  \psi(B_{ii}).$
\begin{step}\label{s7}
$\psi$ is additive and $\ast$-preserving on $\mathcal{A}.$
\end{step}
Let $A = \sum_{i,j = 1,2}A_{ij} $ and $B = \sum_{i,j = 1,2}B_{ij} $
for every $A , B \in \mathcal{A},$ then from Steps \ref{s5} and
\ref{s6} we have
\begin{eqnarray*}
\psi(A+B) &=& \psi( \sum_{i,j = 1,2}A_{ij} + \sum_{i,j =
1,2}B_{ij})\\ &=& \psi( \sum_{i,j = 1,2}(A_{ij} + B_{ij}))\\
 &=&
\sum_{i,j = 1,2}\psi(A_{ij} + B_{ij})\\
 &=& \sum_{i,j =
1,2}\psi(A_{ij}) +\sum_{i,j = 1,2}\psi (B_{ij})\\
 &=& \psi( \sum_{i,j
= 1,2} A_{ij}) + \psi (\sum_{i,j = 1,2} B_{ij}) = \psi(A)+\psi (B).
\end{eqnarray*}
Then $\psi$ is additive.\\

Now, we will prove $\psi$ is $\ast$-preserving. We showed in
Step \ref{s6} that $\Phi(P_{i})=0$ for $i=1,2$. So, $\psi(I) =
\Phi(P_{1}) + \Phi(P_{2}) = 0$. So,
$$\psi(A\diamond_{1} I)=\psi(A)\diamond_{1} I+A\diamond_{1} \psi(I),$$
for all $A\in \mathcal{A}$, yields
$\psi(A+A^{*})=\psi(A)+\psi(A)^{*}$. Since $\psi$ is additive we
have $\psi(A)+\psi(A^{*})=\psi(A)+\psi(A)^{*}$, for all $A\in
\mathcal{A}$, and so $\psi(A^{*})= \psi(A)^{*}$. Thus $\psi$ is
$\ast$-preserving.

\begin{step}\label{s8}
 $\psi(AB) = \psi(A)B + A\psi(B) $ for every $A,B\in \mathcal{A}.  $
\end{step}
Here, we prove our Step by three cases.\\
Case 1. Let $A^{\ast} = -A $ (skew self-adjoint) and $B^{\ast} = B$.\\
By our Main Theorem assumption we have
$$\psi(A\diamond_{1} B) = \psi(A)\diamond_{1} B + A\diamond_{1}\psi(B)$$ and
$$\psi(B\diamond_{1} A) = \psi(B)\diamond_{1} A + B\diamond_{1}\psi(A).$$
By Step \ref{s7}, we know $\psi$ is $ \ast- $preserving, i.e.,
$\psi(T^{\ast}) = \psi(T)^{\ast}$ for every $ T\in \mathcal{A}$. So,
from above relation we have the following
$$\psi(AB - BA) = \psi(A)B - B\psi(A) + A\psi(B) - \psi(B)A,$$
and
$$\psi(BA + AB) = \psi(B)A + A\psi(B) + B\psi(A) + \psi(A)B.$$
Adding these relations, by additivity of $ \psi $, we have
 \begin{equation}\label{v56}
\psi(AB) = \psi(A)B +A\psi(B)
\end{equation}
for $A^{\ast} = -A $ and $B^{\ast} = B.$ It means $\psi$ is
derivation for skew self-adjoint $A$ and self-adjoint $B$.\\
Case 2. Let $A$ and $B$ be self-adjoint.\\
Before we prove $\psi$ is derivation for self-adjoint operators, we
need to show $\psi(iA)=i\psi(A)$, for all $A\in\mathcal{A}$. For
this purpose we should verify $\psi(iI)=0$.\\
We have $iT_{21} + iT^{\ast}_{21} = T_{21}\diamond_{1} iP_{1}$ for all $
T_{21}\in \mathcal{A}_{21}$, and so, by additivity of $\psi$ and
getting $\psi$ of the latter equation, we have
$$\psi( iT_{21}) + \psi(iT^{\ast}_{21}) = i\psi(T_{21})P_{1} + iP_{1}\psi(T_{21})^{\ast} + T_{21}\psi(iP_{1}) +  \psi(iP_{1})T^{\ast}_{21}.$$
Multiplying above equation by $P_{1}$ from the right side and also
from $ \psi(T_{21})\in \mathcal{A}_{21},$ we have
\begin{equation}\label{v432}
 \psi( iT_{21}) = i\psi(T_{21}) + T_{21}\psi(iP_{1}).
\end{equation}
Let put $iT_{21}$ instead of $T_{21}$ in Equation (\ref{v432}) we
have
\begin{equation}\label{v67}
-\psi(T_{21}) = i\psi(iT_{21}) + iT_{21}\psi(iP_{1}),
\end{equation}
and multiply Equation (\ref{v432}) by $i$, we have
\begin{equation}\label{v68}
i\psi(iT_{21}) = -\psi(T_{21}) + iT_{21}\psi(iP_{1}).
\end{equation}
 Adding Equations (\ref{v67}) and (\ref{v68}) together, we have $ 2iT_{21}\psi(iP_{1}) = 0 $, for all
$T_{21}\in\mathcal{A}_{21}$. We obtain $\psi(iP_{1}) = 0$,
since $A_{21}$ is prime.\\
 By a similar way, we can obtain $\psi(iP_{2}) = 0$. Then, from additivity of $ \psi, $
$$\psi(iI) = \psi(iP_{1}) +\psi(iP_{2}) = 0.$$
Now, we are ready to show $\psi(iA)=i\psi(A)$, for all
$A\in\mathcal{A}$.\\
By applying Equation (\ref{v56}) and the fact that $\psi(iI)=0$, we
have
 \begin{equation}\label{v76}
\psi(iA) = \psi(iIA) =\psi(iI)A + iI\psi(A) = i\psi(A),
\end{equation} for all self-adjoint operators $A\in\mathcal{A}$. It
is easy to see that we have $\psi(iA)=i\psi(A)$, for all
$A\in\mathcal{A}$, since we can write as follow by additivity of
$\psi$ and Equation (\ref{v76})
\begin{eqnarray*}
\psi(iA)&=&\psi(i(\Re(A)+i\Im(A)))\\
&=&\psi(i\Re(A)-\Im(A))\\
&=&\psi(i\Re(A))-\psi(\Im(A))\\
&=&i[\psi(\Re(A))+i\psi(\Im(A))]\\
&=& i\psi(\Re(A)+i\Im(A))=i\psi(A).
\end{eqnarray*}
So, we proved that
\begin{equation}\label{vv23}
\psi(iA)=i\psi(A),
\end{equation}
for all $A\in\mathcal{A}$. \\
Now, let get back to prove $\psi$ is derivation for self-adjoint
operators $ A, B \in\mathcal{A}$. By Equation (\ref{v1}) we have the
following \begin{equation}\label{v478}
\psi(AB+BA)=\psi(A)B+B\psi(A)+A\psi(B)+\psi(B)A,
\end{equation}
for all self-adjoint operators $A, B\in\mathcal{A}$.\\
On the other hand, by applying Equation (\ref{v1}) for $iB$ and $iA$
we have
 $\psi(iB\diamond_{1} iA) = \psi(iB)\diamond_{1} iA + iB\diamond_{1} \psi(iA).$\\
Hence, by Equation (\ref{v76}) we have
\begin{eqnarray*}
\psi(-BA + AB) &=& \psi(iB)iA + iA\psi(iB)^{\ast} + iB\psi(iA) +
\psi(iA)(iB)^{\ast}\\
&=& -\psi(B)A + A\psi(B) - B\psi(A) + \psi(A)B.
\end{eqnarray*}
So,
\begin{equation}\label{v345}
\psi(-BA + AB)=-\psi(B)A + A\psi(B) - B\psi(A) + \psi(A)B,
\end{equation}
for all self-adjoint $A, B\in\mathcal{A}$. Adding Equations
(\ref{v478}) and (\ref{v345}) we have \begin{equation}\label{fv1}
\psi(AB) = \psi(A)B + A\psi(B)
\end{equation}
 for self adjoint operators in $\mathcal{A}$.\\
Case 3. Finally, we prove $\psi$ is derivation for all $A$ and $B$
in $\mathcal{A}$. By Equation (\ref{fv1}) we have the following
\begin{eqnarray*}
\psi(AB) &= & \psi[(\Re(A) + i\Im(A))(\Re(B) + i\Im(B))]\\
& =& \psi(\Re(A)\Re(B)) + i\psi(\Re(A)\Im(B)) + i\psi(\Im(A)\Re(B))
- \psi(\Im(A)\Im(B))\\
& =& \psi(\Re(A)) \Re(B) + \Re(A)\psi(\Re(B)) + i\psi(\Re(A))\Im(B)
+ i\Re(A)\psi(\Im(B)) \\
&&+ i\psi(\Im(A))\Re(B) + i\Im(A)\psi(\Re(B)) -
\psi(\Im(A))\Im(B) - \Im(A)\psi(\Im(B))\\
& =& \psi(\Re(A))\Re( B) + \Re(A)\psi(\Re(B)) + i\psi(\Re(A))\Im(B)
+ i\Re(A)\psi(\Im(B)) \\
&&+ i\psi(\Im(A))\Re(B) + i\Im(A)\psi(\Re(B)) +
\psi(i\Im(A))i\Im(B) + i\Im(A)\psi(i\Im(B))\\
& =& \psi(\Re(A))[\Re(B) + i\Im(B)] + \Re(A)\psi[\Re(B) + i\Im(B)]\\
&& +
i\psi(\Im(A))[\Re(B) + i\Im(B)] + i\Im(A)\psi[\Re(B) + i\Im(B)]\\
& =& \psi(\Re(A))B + \Re(A)\psi(B) + i\psi(\Im(A))B +
i\Im(A)\psi(B)\\
& =& \psi(A)B + A\psi(B).
\end{eqnarray*}

This completes the proof of main Theorem.

\bibliographystyle{amsplain}

\end{document}